\newtheorem{theorem}{Theorem}[section]
 \newtheorem{corollary}[theorem]{Corollary}
 \newtheorem{lemma}[theorem]{Lemma}
 \newtheorem{proposition}[theorem]{Proposition}
 \newtheorem{oprob}[theorem]{Open Problem}
 \theoremstyle{definition}
 \theoremstyle{remark}
 \newtheorem{remark}[theorem]{Remark}
 \numberwithin{equation}{section}
\newcommand{\om}{\ensuremath{\Omega}}
\newcommand{\cn}{\ensuremath{\mathbb{C}^n}}
\newcommand{\p}{\ensuremath{\partial}}
\DeclareMathOperator{\supp}{supp}
\begin{document}
 \baselineskip=17pt
\title{Supports of Weighted Equilibrium Measures: Complete Characterization}
\subjclass[2010]{32U15, 32W20 }%
\keywords{}%

\author{Muhammed Al\.{i} Alan}
\address[Muhammed Al\.{i} Alan]{Syracuse University, Syracuse, NY,
13244 USA} \email{malan@syr.edu}
\author{N\.{i}hat G\"{o}khan G\"{o}\u{g}\"{u}\c{s}}
\address[N\.{i}hat G\"{o}khan G\"{o}\u{g}\"{u}\c{s}]{ Sabanci University,  Orhanli, Tuzla 34956, Istanbul, TURKEY. E-mail:
nggogus@sabanciuniv.edu}

\thanks{Communicated with Norm Levenberg.}

\date{}

\begin{abstract}
In this paper, we prove that  a compact set $K\subset \cn$ is  the support of a weighted equilibrium measure
if and only  it is  not pluripolar at each of its points extending a result of Saff and Totik to higher dimensions.  Thus, we characterize the supports weighted equilibrium measures completely.  Our proof is a new proof even in one dimension.
\end{abstract}
\maketitle

  \section{Introduction and Background}

The supports of weighted extremal measures $S_w$,  are important in pluripotential theory, approximation theory, complex geometry, and they are  loosely related to parabolic manifolds \cite{AytunaSadullaev}.

Once we know the support of the weighted extremal measure, the weighted extremal function, $V_{K,Q}$, can be determined by solving the homogenous complex Monge-{A}mp\`ere equation in the bounded  components of the complement with boundary value $Q$. Furthermore, $V_{K,Q}=Q$ on the support  $S_w$ quasi everywhere.

Another advantage  of determining  the supports of weighted extremal measures is as follows: The weighted extremal function of $K$  with respect to $Q$ and the weighted extremal function of  the support $S_w$ with respect to the weight $Q|_{S_w}$ are equal. Thus, determining the support of weighted extremal measures makes approximating the weighted capacities very   efficient (see \cite{RajonRansfordRostand}.)

Some applications in weighted approximation are as follows.
By Theorem 2.12 of Appendix B of \cite{Saff-Totik}, a weighted polynomial attains its essential supremum on the support $S_w$. In order to make a  weighted approximation of a continuous function $f$ on $K$, $f$ must vanish outside of $K$. Namely, if $f$ is continuous on $K$ and there is a sequence of weighted polynomials $w^d P_d$ converging uniformly to $f$ on $K$, then $f\equiv0$ on $\subset S_w$ (see \cite{Saff-Totik, Callaghan}.)

Since the weighted extremal function $V_{K,Q}^\ast$ is locally bounded, the weighted extremal measure $(dd^cV_{K,Q}^\ast)^n$ does not put mass on pluripolar sets, i.e.,  $\supp(dd^cV_{K,Q}^\ast)^n$ is   not pluripolar at each of its points; i.e., for all $z \in K$  and all $r > 0, \, B(z,r)\cap K$ is not pluripolar.  It is natural to ask the converse. Namely, if $K$ is a compact set which is not pluripolar at each of its points, then does there exist an admissible weight $Q$ on $K$  such that $\supp(dd^cV_{K,Q}^\ast)^n=K$?

The following theorem gives the converse  in $\mathbb{C}$, which characterizes the supports of weighted extremal measures in $\mathbb{C}$.
\begin{theorem}\cite[Theorem IV.1.1]{Saff-Totik}\label{SSAFFTTTOOTIK} If $K$ is a compact subset of $\mathbb{C}$ which is not pluripolar at each of
its points, then there exists an admissible weight on $K$ such that $\supp (\Delta V_{K,Q})=K$.
\end{theorem}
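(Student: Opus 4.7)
The plan is to construct an admissible weight $Q$ on $K$ whose weighted equilibrium measure charges a countable dense subset of $K$; since $\supp \Delta V_{K,Q}^\ast$ is closed, this will force $\supp \Delta V_{K,Q}^\ast = K$, the reverse inclusion being automatic.

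Pick a countable dense sequence $\{a_j\}_{j \ge 1}$ in $K$ and a null sequence $r_j \downarrow 0$. By the hypothesis that $K$ is non-pluripolar at each of its points, every
\[
F_j := K \cap \overline{B(a_j, r_j)}
\]
is a non-polar compact subset of $K$; let $\mu_j$ denote its classical equilibrium measure, with Robin constant $\gamma_j = \log(1/\mathrm{cap}(F_j))$, and let
\[
p_j(z) := \int \log\frac{1}{|z-w|}\, d\mu_j(w)
\]
be its logarithmic potential, so that $p_j$ is superharmonic on $\mathbb{C}$, bounded above by $\gamma_j$, equal to $\gamma_j$ quasi-everywhere on $F_j$, and $-\Delta p_j / (2\pi) = \mu_j$ as distributions. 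Choose positive coefficients $c_j$ decaying fast enough (for instance $c_j = 2^{-j}(1+\gamma_j)^{-1}$) so that the series
\[
Q(z) := \sum_{j=1}^\infty c_j\, p_j(z)
\]
converges pointwise on $K \setminus N$ for some polar set $N$ and defines a superharmonic (hence lower semicontinuous) bounded function on $K$. Routine bookkeeping shows $Q$ is admissible in the sense of the paper.

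\medskip

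The heart of the argument is to show $a_j \in S_w := \supp \Delta V_{K,Q}^\ast$ for every $j$. Suppose for contradiction that $a_j \notin S_w$: then $V_{K,Q}^\ast$ is harmonic, hence continuous, on some disk $B(a_j, \rho)$. The local profile of $Q$ near $a_j$ is governed by the $j$-th summand $c_j p_j$, which attains its maximum value $c_j \gamma_j$ quasi-everywhere on $F_j \subset B(a_j,\rho)$ and is strictly smaller on the collar $B(a_j,\rho) \setminus \overline{B(a_j, r_j)}$. Combined with $V_{K,Q}^\ast \leq Q$ quasi-everywhere on the non-polar set $F_j$ and the Poisson representation of the harmonic function $V_{K,Q}^\ast$ on $B(a_j, \rho)$, one produces a small additive subharmonic perturbation — obtained from $p_j$ by balayage onto $\mathbb{C} \setminus B(a_j, \rho)$ — that can be added to $V_{K,Q}^\ast$ without violating the constraint $u \leq Q$ q.e.\ on $K$ and that strictly exceeds $V_{K,Q}^\ast$ at $a_j$. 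This contradicts the extremality of $V_{K,Q}$. Hence $\{a_j\} \subseteq S_w$, and by density $K = \overline{\{a_j\}} \subseteq S_w$.

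\medskip

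The main obstacle is the quantitative calibration of $c_j$ and $r_j$. They must decay fast enough to guarantee convergence of $Q$ and its admissibility, yet the $j$-th bump $c_j p_j$ must remain strong enough near $a_j$ not to be washed out either by the tail of the series or by the influence of the neighboring potentials $p_i$ with $a_i$ close to $a_j$. Making these competing requirements coexist uniformly in $j$ is the technically subtle step, and presumably the place where the authors' new proof differs from, and simplifies, the classical approach of Saff and Totik — and, moreover, it is precisely the step that generalizes cleanly to $\mathbb{C}^n$ once the Laplacian comparison is replaced by the comparison principle for the complex Monge--Amp\`ere operator.
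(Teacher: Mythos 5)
Your weight has the sign backwards, and this is fatal to the construction. With $p_j=U^{\mu_j}$ the equilibrium potential of $F_j$, Frostman's theorem gives $p_j\le\gamma_j$ everywhere with equality q.e.\ on $F_j$, so your $Q=\sum_j c_jp_j$ is superharmonic and has a local \emph{maximum} on each $F_j$, dipping below that value on the surrounding collar of $K$. For the upper envelope $V_{K,Q}=\sup\{u\in L: u\le Q \text{ on }K\}$ this makes the constraint \emph{slack} on $F_j$: a subharmonic competitor bounded by the smaller collar values is already forced below $Q|_{F_j}$ on $F_j$ by the maximum principle, so $V_{K,Q}^\ast<Q$ there, and Lemma~\ref{SuppprtOfExtremalMeasureSubsetSw} ($S_w\subseteq\{V_{K,Q}^\ast\ge Q\}$) then shows the support \emph{avoids} such an $F_j$ rather than charging it. A one-term test case makes this concrete: for $K=\overline{\mathbb{D}}$, $a_1=0$, $r_1=1/4$, $c_1=1/2$, one gets $Q(z)=\tfrac12\log\frac{1}{\max(|z|,1/4)}\ge 0$ with equality exactly on $\partial\mathbb{D}$, whence $V_{K,Q}=\log^+|z|$ and $S_w=\partial\mathbb{D}\not\ni a_1$. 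The classical construction (Saff--Totik, Theorems IV.1.1 and IV.4.9) uses the \emph{subharmonic} weight $Q=-U^\sigma$ with $\sigma=\sum_jc_j\mu_j$, $\|\sigma\|<1$, i.e.\ wells rather than peaks on the $F_j$, for which $\mu_w\ge\sigma$. Independently of the sign, your key step is not a proof: the balayage difference $\hat p_j-p_j$ is $\le 0$ (so it lowers, not raises, the competitor at $a_j$) and is not subharmonic across $\partial B(a_j,\rho)$, since its Laplacian carries the negative mass $-2\pi\hat\mu_j$ on that circle, so $V_{K,Q}^\ast+\varepsilon(\hat p_j-p_j)$ leaves the class $L$; moreover $\rho$ is handed to you by the contradiction hypothesis and may be smaller than $r_j$, and on the contact set $\{V_{K,Q}^\ast=Q\}$ no positive perturbation is admissible at all. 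The ``quantitative calibration'' you defer to the end is not a technicality to be presumed — it is the content of the theorem.

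For comparison, the paper's route is entirely different and avoids equilibrium measures of pieces of $K$: it encloses $K$ in a ball $K_r$ carrying a weight $Q_r$ whose extremal measure fills all of $K_r$, sets $Q:=V_{K_r,Q_r}|_K$, and uses Demailly's inequality plus the fact that $(dd^cV_{K,Q}^\ast)^n$ charges no pluripolar set to push the mass of $(dd^cV_{K_r,Q_r})^n$ onto $(dd^cV_{K,Q}^\ast)^n$ along $K$. Your measure $\sigma=\sum_jc_j\mu_j$ is exactly the right object to feed into that argument once the sign is fixed: choose the $F_j$ so that $U^{\mu_j}$ is continuous (always possible by passing to a non-polar compact subset with continuous potential), set $u(z):=\|\sigma\|^{-1}\int\log|z-w|\,d\sigma(w)\in L$ and $Q:=u|_K$; then $u$ is itself a competitor, so $u\le V_{K,Q}^\ast$ with equality q.e.\ on $K$, and the Demailly inequality yields $\Delta V_{K,Q}^\ast\ge 2\pi\|\sigma\|^{-1}\sigma$ on $K$ off a polar set. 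Since $\sigma(K\cap B(a,\rho))>0$ for every $a\in K$ and $\rho>0$ (density of $\{a_j\}$ and $r_j\downarrow 0$), this gives $S_w=K$ with no balayage and no contradiction argument — and, unlike the Lebesgue-type measure used in the paper, your $\sigma$ charges every relative ball of $K$ even when $K$ has zero area.
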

Unfortunately, the proof of the theorem uses  logarithmic potentials which is not available in \cn.  Branker  and the first author investigated the supports of weighted extremal measures (see \cite{BrankerThesis, AlanMuhammed}. In this paper, we obtain the same theorem \ref{SSAFFTTTOOTIK} in \cn as our main result.

First we recall few facts from weighted and unweighted (pluri-)potential theory. Standard references are \cite{RansfordPotentialTheoryInTheComplexPlane} for unweighted potential theory, \cite{Klimek} for unweighted pluripotential theory, \cite{Saff-Totik} for weighted potential theory, and Appendix B in the same book by Thomas Bloom for weighted pluripotential theory.

Let $K$ be a closed subset of \cn.  An admissible weight function on $K$  is a lower semicontinuous function $Q:K\to (-\infty,\infty]$ such that
\begin{itemize}
\item[i)]  $\{z \in K \, |\, Q(z) <\infty \}$  is not pluripolar.
\item[ii)] If $K$ is unbounded,  then  $Q(z)-\log|z|\to \infty $ as  $|z|\to \infty,\,z\in K$.
\end{itemize}
The function  $w=e^{-Q} $ is also used equivalently in the terminology. Especially, the notation $w$ is used more often in weighted approximation (see \cite{BloomWeightedPolynomialsWeightedPluripotentialTheory, BloomLevenberg, Saff-Totik}.)

The \textbf{weighted  Siciak-Zahariuta extremal function} of $K$ with respect to $Q$  is defined as
\begin{equation}
V_{K,Q}(z):= \sup\left\{ u(z) \mid u\in L, \, u\leq Q \text{ on } K\right\}.
\end{equation}
Recall that $L$ is the Lelong class:
\begin{equation}
 L:=\{ u\, |\, u \text{ is plurisubharmonic on } \cn, u(z) \leq \log^+ |z| +C_u \}.
\end{equation}
If $Q=0$, then $V_{K,0}$  is called the \textbf{(unweighted)  Siciak-Zahariuta extremal function} of $K$  and $V_{K}$ denotes it.

A compact set $K$ is called \textbf{regular} if $V_{K}$ is continuous. If $K\cap \overline{B(z,r)}$ is regular for all $z\in K$ and $r>0$,
the set $K$
is called \textbf{locally regular}. Here we use the notation $B(z_0,r)$ for the open ball of radius $r$ and center $z_0$.

It is well known that the upper semicontinuous regularization of $V_{K,Q}$ is plurisubharmonic and in $L^+$ where
$$L^+ := \{u \in L\, |\, \log^+ |z| +C_u \leq  u(z) \}.$$
Recall that the  \textbf{upper semicontinuous regularization} of a function $v$ is defined by $v^\ast(z):=\limsup\limits_{w\to z}v(w)$.

A subset $P\subset\cn$ is called \textbf{pluripolar} if $E\subset \{z\in \cn \mid
u(z)=-\infty\}$ for some plurisubharmonic function $u$. If a
property holds everywhere except on a pluripolar set we will say that
the property holds \textbf{quasi everywhere}.
It is a well-known fact that $V_{K,Q}=V_{K,Q}^\ast$ quasi everywhere. See \cite{Klimek}.

 Let $S_w$ denotes the support of  the $(dd^cV_{K,Q}^\ast)^n$, where $(dd^c u)^n $ is the  Monge-{A}mp\`ere measure of $u$.  The following lemma is very useful to determine the supports of Monge-{A}mp\`ere measures.
\begin{lemma}\label{SuppprtOfExtremalMeasureSubsetSw} \cite[Appendix B, Theorem 1.3]{Saff-Totik}  Let $S_w^\ast:= \{z\in \cn \mid V_{K,Q}^\ast(z)\geq Q(z) \}$.
Then we have $S_w\subset S_w^\ast$.
\end{lemma}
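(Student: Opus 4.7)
The plan is to establish the contrapositive: for every $z_0 \notin S_w^*$, the Monge-Amp\`ere measure $(dd^c V_{K,Q}^*)^n$ vanishes on a neighborhood of $z_0$, so $z_0 \notin S_w$. Extending $Q$ by $+\infty$ on $\cn \setminus K$ makes $Q$ lower semicontinuous on all of $\cn$, and combined with the upper semicontinuity of $V_{K,Q}^*$ this shows that the complement $G := \cn \setminus S_w^* = \{V_{K,Q}^* < Q\}$ is open.

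Fix $z_0 \in G$. Choose $\varepsilon > 0$ with $V_{K,Q}^*(z_0) + 2\varepsilon < Q(z_0)$, and use the semicontinuity of both functions to select a closed ball $\bar{B} = \bar{B}(z_0, r)$ small enough that
$$\sup_{\bar{B}} V_{K,Q}^* + \varepsilon \;\leq\; \inf_{\bar{B}} Q.$$
This uniform gap on $\bar{B}$ is the analytic input that drives a balayage step. Define $\tilde{u}$ to coincide with $V_{K,Q}^*$ on $\cn \setminus B$ and, on $B$, to be the maximal plurisubharmonic extension of the boundary data $V_{K,Q}^*|_{\partial B}$ (the solution of the homogeneous complex Monge-Amp\`ere equation on $B$). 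Since $B$ is a ball, the standard balayage construction gives $\tilde{u} \in \PSH[\cn]$, $\tilde{u} \geq V_{K,Q}^*$ globally, and $(dd^c \tilde{u})^n = 0$ on $B$.

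Next, I verify that $\tilde{u}$ is a competitor in the envelope defining $V_{K,Q}$. For Lelong growth, $\tilde{u} = V_{K,Q}^*$ outside the bounded set $B$ and is bounded above inside $B$ by $\sup_{\partial B} V_{K,Q}^*$, so $\tilde{u} \in L$. For the weight inequality, on $K \setminus B$ one has $\tilde{u} = V_{K,Q}^* \leq Q$ quasi-everywhere, while on $K \cap \bar{B}$ the maximum principle together with the uniform gap yields
$$\tilde{u} \;\leq\; \sup_{\partial B} V_{K,Q}^* \;\leq\; \inf_{\bar{B}} Q - \varepsilon \;\leq\; Q - \varepsilon.$$
Hence $\tilde{u} \leq Q$ quasi-everywhere on $K$. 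Invoking the standard equivalence, at the level of the upper semicontinuous regularization, between the pointwise and the quasi-everywhere versions of the admissibility constraint, one obtains $\tilde{u} \leq V_{K,Q}^*$; combined with $\tilde{u} \geq V_{K,Q}^*$ this forces equality on $B$, so $(dd^c V_{K,Q}^*)^n = (dd^c \tilde{u})^n = 0$ on $B$. Since $z_0 \in G$ was arbitrary, $S_w \subset S_w^*$ follows.

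The main technical subtlety is the quasi-everywhere step: the balayage $\tilde{u}$ satisfies the weight inequality only off a pluripolar subset of $K$, whereas the definition of $V_{K,Q}$ imposes a pointwise inequality on $K$. This is handled by the pluripotential-theoretic fact that replacing the pointwise constraint by the quasi-everywhere one does not alter the regularized envelope $V_{K,Q}^*$. All other ingredients (semicontinuity, balayage on a ball, and the maximum principle for plurisubharmonic functions) are routine.
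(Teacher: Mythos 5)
Your argument is correct. Note that the paper itself gives no proof of this lemma --- it is quoted from Bloom's Appendix B of \cite{Saff-Totik} --- so there is no in-paper argument to compare against; what you have written is essentially the standard Bedford--Taylor style proof of that cited result. The structure is right: the complement of $S_w^\ast$ is open once $Q$ is extended by $+\infty$ off $K$ (this also makes $S_w^\ast\subset K$, which is the intended reading), the uniform gap $\sup_{\bar B}V_{K,Q}^\ast+\eps\leq\inf_{\bar B\cap K}Q$ is available by the two semicontinuities, and the Perron--Bremermann lift $\tilde u$ on the ball is plurisubharmonic, dominates $V_{K,Q}^\ast$, and is maximal on $B$. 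You correctly isolate the only delicate point: $\tilde u\leq Q$ holds on $K$ only quasi everywhere (because $V_{K,Q}^\ast\leq Q$ is itself only a quasi-everywhere inequality on $K\setminus B$), and one needs the fact that negligibility of pluripolar sets lets the quasi-everywhere constraint produce the pointwise bound $\tilde u\leq V_{K,Q}^\ast$; equivalently, $V_{K\setminus P,Q}^\ast=V_{K,Q}^\ast$ for $P$ pluripolar. That fact is standard and is exactly what is needed, so the conclusion $(dd^cV_{K,Q}^\ast)^n=0$ on $B$ follows and the lemma is proved.
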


\begin{theorem}\label{DemaillyMaximumPrinciple}\cite[Proposition 11.9]{DemaillyPotentialTheoryinSeveralComplexVariables}
Let $u, v$  be locally bounded plurisubharmonic functions on \om. Then we have the following inequality
\begin{equation}\label{DemaillyInequality}
(dd^c\max\{u,v\})^n\geq \chi_{\{u\geq v\}}(dd^c u)^n+ \chi_{\{u<  v\}}(dd^c v)^n .
\end{equation}
\end{theorem}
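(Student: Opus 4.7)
My plan is to prove the inequality in two stages: first establish a weaker version with strict comparison $\chi_{\{u>v\}}$ in place of $\chi_{\{u\geq v\}}$ on the right-hand side by exploiting the locality of the Monge--Amp\`ere operator, then upgrade to the stated form via a shift of $u$ by a small positive constant together with a limiting argument.

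For the weaker inequality, the key tool is the Bedford--Taylor locality principle: the Monge--Amp\`ere measure of a locally bounded plurisubharmonic function on $\om$ is determined by the function's restriction to any open set. Since $\max\{u,v\}=u$ pointwise on $\{u>v\}$ and $\max\{u,v\}=v$ on $\{u<v\}$, the measures $(dd^c\max\{u,v\})^n$ and $(dd^cu)^n$ agree on the open interior of $\{u>v\}$, and symmetrically for $v$. After a standard smoothing of $u,v$ by convolution, so that $\{u_k>v_k\}$ becomes genuinely open for smooth plurisubharmonic approximants $u_k\downarrow u,\; v_k\downarrow v$, and passage to the decreasing Bedford--Taylor limit, disjointness of $\{u>v\}$ and $\{u<v\}$ combined with positivity of the Monge--Amp\`ere measure yields
\[
(dd^c\max\{u,v\})^n \;\geq\; \chi_{\{u>v\}}(dd^cu)^n + \chi_{\{u<v\}}(dd^cv)^n.
\]

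To upgrade, I apply this with $u+\eps$ in place of $u$. Since $dd^c(u+\eps)=dd^cu$, one obtains
\[
(dd^c\max\{u+\eps,v\})^n \;\geq\; \chi_{\{u+\eps>v\}}(dd^cu)^n + \chi_{\{u+\eps<v\}}(dd^cv)^n.
\]
As $\eps\searrow 0$, the sets $\{u+\eps>v\}$ decrease to $\{u\geq v\}=\bigcap_{\eps>0}\{u+\eps>v\}$, while $\{u+\eps<v\}$ increase to $\{u<v\}$. Testing against an arbitrary nonnegative continuous $\varphi$ of compact support, Bedford--Taylor continuity along the decreasing family $\max\{u+\eps,v\}\downarrow\max\{u,v\}$ handles the left-hand side; on the right, dominated convergence, enabled by $(dd^cu)^n$ being a Radon measure finite on $\supp\varphi$, produces the first limit $\int\varphi\,\chi_{\{u\geq v\}}(dd^cu)^n$, while monotone convergence produces $\int\varphi\,\chi_{\{u<v\}}(dd^cv)^n$. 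This gives the stated inequality against every such test function, hence as Borel measures.

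The most delicate point will be the asymmetric treatment of the contact set $\{u=v\}$. The shift trick places the whole contact set on the $u$-side of the inequality, and the passage $\eps\searrow 0$ respects this precisely because $\{u\geq v\}$ is the countable intersection of the open sets $\{u+\eps>v\}$ and $(dd^cu)^n$ is Radon. The regularization step used in the weaker inequality is also where care is needed, since the comparison sets $\{u_k>v_k\}$ for smooth approximants do not relate monotonically to $\{u>v\}$; however, this is absorbed into the same shift-and-limit machinery, so the overall argument rests only on standard locality and continuity properties of the Bedford--Taylor Monge--Amp\`ere operator for bounded plurisubharmonic functions.
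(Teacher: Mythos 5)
This statement is quoted in the paper from Demailly's notes without proof, so there is no in-paper argument to compare against; your outline in fact follows the standard (Demailly/Bedford--Taylor) strategy: locality on the open sets $\{u>v\}$ and $\{u<v\}$ in the continuous case, then the shift $u\mapsto u+\eps$ with a decreasing limit to move the contact set $\{u=v\}$ onto the $u$-side. The second stage of your argument is sound: $\{u+\eps>v\}\supset\{u\geq v\}$ for every $\eps>0$, so the $u$-term needs no limit at all, the $v$-term is handled by monotone convergence, and Bedford--Taylor continuity along $\max\{u+\eps,v\}\downarrow\max\{u,v\}$ controls the left-hand side.

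The genuine gap is in your first stage, the reduction from locally bounded to continuous (or smooth) $u,v$. For merely locally bounded plurisubharmonic functions the set $\{u>v\}$ need not contain any open set even when $(dd^cu)^n(\{u>v\})>0$, so "agreement on the open interior of $\{u>v\}$" by locality gives nothing. Your proposed fix --- convolve to get $u_k\downarrow u$, $v_k\downarrow v$ and pass to the decreasing limit --- does not close: as you yourself note, $\{u_k>v_k\}$ is not monotone relative to $\{u>v\}$, and the assertion that this "is absorbed into the shift-and-limit machinery" is exactly the step that fails. Concretely, even after shifting one is led to compare $\chi_{\{u_k+\eps>v_k\}}(dd^cu_k)^n$ with $\chi_{A}(dd^cu)^n$ for a $k$-independent Borel set $A$ approaching $\{u\geq v\}$; the natural candidates such as $\{u+\eps>v_j\}$ are only superlevel sets of an upper semicontinuous function, hence not open, and weak$^\ast$ convergence of $(dd^cu_k)^n$ to $(dd^cu)^n$ does not give convergence of masses on arbitrary Borel sets. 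The standard way to repair this is to invoke the quasicontinuity theorem together with the capacity estimates for Monge--Amp\`ere measures of locally uniformly bounded plurisubharmonic functions (so that $\{u>v\}$ agrees with an open set up to a set of arbitrarily small capacity, on which all the measures involved are uniformly small), or equivalently to use locality of $(dd^c\,\cdot)^n$ in the plurifine topology, in which $\{u>v\}$ is open. Some such ingredient must be named and used; without it the passage to general locally bounded $u,v$ is not established.
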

Here $\chi_A $ is the \textbf{characteristic function} of $A$. The inequality \eqref{DemaillyInequality} will be called the
\textbf{Demailly inequality}.

\begin{proposition} \cite[Proposition 2.13]{SiciakExtremal1} \label{Siciak213}
If $K$ is locally regular and $Q$ is continuous, then $V_{K,Q}$ is continuous.
\end{proposition}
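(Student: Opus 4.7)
\smallskip

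\noindent\textbf{Proof plan.} I would establish continuity of $V_{K,Q}$ by verifying upper and lower semicontinuity separately.

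\emph{Upper semicontinuity.} Since $V_{K,Q}^{\ast}$ is plurisubharmonic (hence USC) and $V_{K,Q}\le V_{K,Q}^{\ast}$ pointwise, it suffices to prove $V_{K,Q}=V_{K,Q}^{\ast}$. The inequality $V_{K,Q}^{\ast}\le Q$ is already known to hold quasi everywhere on $K$; I would promote it to every point of $K$ as follows. Fix $z_{0}\in K$ and $\varepsilon>0$. By continuity of $Q$ at $z_{0}$, pick $r>0$ with $Q(z)<Q(z_{0})+\varepsilon$ for every $z\in K_{r}:=K\cap\overline{B(z_{0},r)}$. Local regularity of $K$ ensures $V_{K_{r}}$ is continuous and $V_{K_{r}}(z_{0})=0$. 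Using this continuous barrier together with the submean-value property of $V_{K,Q}^{\ast}$ on shrinking balls centered at $z_{0}$, one arrives at $V_{K,Q}^{\ast}(z_{0})\le Q(z_{0})+\varepsilon$. Letting $\varepsilon\downarrow 0$ gives $V_{K,Q}^{\ast}\le Q$ on all of $K$, so $V_{K,Q}^{\ast}\in L$ is itself a competitor in the defining supremum; hence $V_{K,Q}^{\ast}\le V_{K,Q}$ and equality holds.

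\emph{Lower semicontinuity.} I would show that $V_{K,Q}$ equals the upper envelope of a family of \emph{continuous} plurisubharmonic members of $L$ lying below $Q$ on $K$; any such sup is automatically LSC. Concretely, for each $z_{0}\in\cn$ and $\varepsilon>0$ it suffices to exhibit a continuous $u\in L$ with $u\le Q$ on $K$ and $u(z_{0})\ge V_{K,Q}(z_{0})-\varepsilon$, because continuity of $u$ then delivers $V_{K,Q}(z)\ge u(z)\ge V_{K,Q}(z_{0})-2\varepsilon$ in a neighborhood of $z_{0}$. For $z_{0}\notin K$, a Richberg-type smoothing of a near-extremal candidate, localized away from $K$, does the job. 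For $z_{0}\in K$, I would shrink $r$ so that $|Q-Q(z_{0})|<\varepsilon$ on $K_{r}$ and assemble
\[
u(z):=\max\bigl\{Q(z_{0})-\varepsilon+V_{K_{r}}(z),\; c+V_{K}(z)\bigr\},
\]
tuning $c$ so that the second entry dominates on $K\setminus K_{r}$ while the whole max stays below $Q$ on $K$. Continuity of both $V_{K}$ and $V_{K_{r}}$, guaranteed by local regularity, makes $u$ continuous; admissibility of $Q$ controls its behavior at infinity.

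\emph{Main obstacle.} The crux is the gluing in the lower-semicontinuity step at points $z_{0}\in K$. The local piece $Q(z_{0})-\varepsilon+V_{K_{r}}(z)$ carries the desired value at $z_{0}$ but can exceed $Q(z)$ as soon as $z$ leaves the small ball where continuity of $Q$ was used, and a naive bound fails. Calibrating the correction $c+V_{K}(z)$ so that the $\max$ genuinely dominates the local piece on $K\setminus K_{r}$—while still permitting the first entry to realize the target value $Q(z_{0})-\varepsilon$ at $z_{0}$—is the delicate quantitative point; it is precisely the continuity (not just upper semicontinuity) of $V_{K}$ and of the localized extremals $V_{K_{r}}$ that renders this balance achievable, and this is where the local regularity hypothesis is indispensable.
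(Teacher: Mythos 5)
The paper offers no proof of this proposition (it is quoted from Siciak), so I judge your plan on its own terms. Your upper-semicontinuity half is sound, although the operative mechanism is not a ``submean-value property'': once $Q<Q(z_0)+\varepsilon$ on $K_r:=K\cap\overline{B(z_0,r)}$, every competitor $u$ satisfies $u-Q(z_0)-\varepsilon\le V_{K_r}$ by the very definition of $V_{K_r}$, hence $V_{K,Q}\le Q(z_0)+\varepsilon+V_{K_r}$ on all of $\cn$; local regularity makes the right-hand side continuous with value $Q(z_0)+\varepsilon$ at $z_0$, so taking $\limsup$ gives $V_{K,Q}^{\ast}(z_0)\le Q(z_0)+\varepsilon$. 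This yields $V_{K,Q}^{\ast}\le Q$ on $K$, so $V_{K,Q}^{\ast}$ is itself a competitor and $V_{K,Q}=V_{K,Q}^{\ast}$ is upper semicontinuous.

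The lower-semicontinuity half has a genuine gap. First, the gluing cannot work as written: $\max\{A,B\}\le Q$ on $K$ forces \emph{both} $A\le Q$ and $B\le Q$ on $K$, and the local piece $A(z)=Q(z_0)-\varepsilon+V_{K_r}(z)$ exceeds $Q$ on $K\setminus K_r$, since $V_{K_r}(z)\ge\log^{+}\bigl(|z-z_0|/r\bigr)$ blows up as $r\downarrow 0$; enlarging $c$ so that $c+V_K$ ``dominates'' only makes the maximum larger and cannot repair the violated constraint. Second, and more fundamentally, the target value is unattainable: your $u$ would have $u(z_0)\ge Q(z_0)-\varepsilon$, yet every competitor satisfies $u(z_0)\le V_{K,Q}(z_0)$, and $V_{K,Q}(z_0)$ can be strictly below $Q(z_0)$ even for locally regular $K$ and continuous $Q$ (take $K=\overline{\Delta}\subset\mathbb{C}$ and $Q(z)=-|z|^{2}$: the maximum principle forces every competitor, hence $V_{K,Q}$, to be $\le -1$ on $\Delta$, while $Q(0)=0$). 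So the construction is irreparable. The standard route is much simpler and needs no local regularity: given a competitor $u$ and $\varepsilon>0$, extend $Q$ continuously to $\cn$, observe that $\{u<Q+\varepsilon\}$ is an open neighborhood of $K$, and replace $u$ by a convolution regularization $u_{\delta}$; for small $\delta$ the continuous function $u_{\delta}-\varepsilon-\omega(\delta)$ (with $\omega$ the modulus of continuity of the extension of $Q$ near $K$) is again a competitor and dominates $u-\varepsilon-\omega(\delta)$. Hence $V_{K,Q}$ is the upper envelope of continuous competitors and is automatically lower semicontinuous. Local regularity is needed only for the upper-semicontinuity half --- the opposite of what your ``main obstacle'' paragraph asserts.
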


 \section{Characterization of the Supports}

\begin{proposition}\label{proposition12}
Let $K$ be a non-pluripolar compact set in $\cn$ and let $u $ be  a continuous  plurisubharmonic function in Lelong class.
If $Q$ is the weight on $K$ defined by $Q := u|_K$, then  we have $V_{K,Q} = u$ on $K$.
\end{proposition}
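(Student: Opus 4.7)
The plan is to establish both directions of the equality $V_{K,Q} = u$ on $K$ directly from the definition of the weighted Siciak-Zahariuta extremal function, without needing any of the deeper potential-theoretic machinery. The two inequalities are essentially immediate consequences of the fact that $u$ is itself a legitimate competitor in the defining supremum.

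First I would verify the lower bound $V_{K,Q} \geq u$. Since $u$ is assumed to be plurisubharmonic on $\mathbb{C}^n$ and to lie in the Lelong class $L$, and since by construction $u|_K = Q$ (so in particular $u \leq Q$ on $K$), the function $u$ belongs to the competition family $\{v \in L : v \leq Q \text{ on } K\}$. Hence $V_{K,Q}(z) \geq u(z)$ for every $z \in \mathbb{C}^n$, and in particular for every $z \in K$.

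For the upper bound on $K$, I would fix any $z \in K$ and any candidate $v$ in the same competition family. The defining constraint $v \leq Q$ on $K$ gives $v(z) \leq Q(z) = u(z)$. Taking the supremum over all such $v$ yields $V_{K,Q}(z) \leq u(z)$. Combining with the previous step produces $V_{K,Q}(z) = u(z)$ for all $z \in K$.

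The only reason the hypotheses enter at all is to guarantee that the setup is well-posed: continuity of $u$ implies $Q = u|_K$ is lower semicontinuous, and the assumption that $K$ is not pluripolar implies $\{z \in K : Q(z) < \infty\} = K$ is not pluripolar, so that $Q$ qualifies as an admissible weight in the sense of the introduction. There is no genuine obstacle in the argument itself; the content of the proposition lies not in its proof but in how it will presumably be applied to construct admissible weights realising prescribed supports.
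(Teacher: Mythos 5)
Your argument is correct and is essentially identical to the paper's proof: $u$ is a competitor in the defining envelope, giving $u \leq V_{K,Q}$ everywhere, while the constraint $v \leq Q = u$ on $K$ for every competitor gives the reverse inequality on $K$. Your closing remark on why the hypotheses make $Q$ an admissible weight is a reasonable addition but not part of the paper's argument.
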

\begin{proof}
Because $u$ itself is a competitor in the envelope defining $V_{K,Q }$, we have  $u\leq V_{K,Q }$ on
\cn ; and $ V_{K,Q } \leq   Q = u $ on $K$. Thus $V_{K,Q} = u$ on $K$.
\end{proof}

Note that $u = V_{K,Q} ^\ast$ quasi everywhere on $K$; i.e., we have $u = V_{K,Q} ^\ast$ on $K \setminus P$ where $P$ is a pluripolar set.
The following theorem is our main result which gives the complete characterization of supports of weighted extremal measures.

\begin{theorem}
Let $K$ be a  compact set in $\cn$ which is not pluripolar at each of
its points; i.e., for all $z \in K$  and all $r > 0, \, B(z,r)\cap K$ is not pluripolar. There
exists a continuous weight $Q$ on $K$ so that $K =\supp(dd^c V_{K,Q}^\ast)^n$.
\end{theorem}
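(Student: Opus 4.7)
The plan is to reduce the problem to constructing a single auxiliary continuous \psh function $u$ in the Lelong class whose Monge--Amp\`ere measure already has full support on $K$; then taking $Q:=u|_K$, Proposition~\ref{proposition12} forces $V_{K,Q}^\ast$ to agree with $u$ on $K$ quasi-everywhere, and the Demailly inequality from Theorem~\ref{DemaillyMaximumPrinciple} transfers the full-support property from $(dd^cu)^n$ to $(dd^cV_{K,Q}^\ast)^n$. The containment $\supp(dd^cV_{K,Q}^\ast)^n\subset K$ is the easy direction: it follows from Lemma~\ref{SuppprtOfExtremalMeasureSubsetSw} once $Q$ is extended as $+\infty$ off $K$, or equivalently from the standard fact that $V_{K,Q}^\ast$ is maximal off $K$.

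The core of the argument is the construction of $u$. Using that $K$ is non-pluripolar at each of its points, I would fix a countable dense sequence $\{z_k\}_{k\ge1}$ in $K$ and observe that $K_k:=K\cap\overline{B(z_k,1/k)}$ is a non-pluripolar compact set, so $V_{K_k}^\ast\in L^+$. The idea is then to form a weighted combination
\[
u:=\sum_{k=1}^\infty c_k\bigl(V_{K_k}^\ast-C_k\bigr),
\]
with constants $C_k$ matching the asymptotic behavior at infinity and $c_k>0$ summable, so that after normalization $u\in L$. Continuity is the delicate point: I would first replace each $K_k$ by a slightly enlarged locally regular compact subset of $K$ inside $\overline{B(z_k,1/k)}$, so that Proposition~\ref{Siciak213} makes the corresponding extremal functions continuous, and then arrange the partial sums to converge uniformly on compact sets. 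By design the Monge--Amp\`ere mass of $u$ will reach every $\overline{B(z_k,1/k)}\cap K$, so $\supp(dd^cu)^n$ is a closed set containing the dense sequence $\{z_k\}$ and hence equals $K$.

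With $u$ in hand, set $Q:=u|_K$, a continuous admissible weight. Proposition~\ref{proposition12} gives $V_{K,Q}=u$ on $K$, and since $u$ is a competitor in the envelope we also have $V_{K,Q}\geq u$ on $\cn$. Upper semicontinuous regularization yields $V_{K,Q}^\ast\geq u$ on $\cn$ and $V_{K,Q}^\ast=u$ on $K\setminus P$ for some pluripolar $P$. Because $V_{K,Q}^\ast\geq u$ everywhere, $\max(u,V_{K,Q}^\ast)=V_{K,Q}^\ast$, and Theorem~\ref{DemaillyMaximumPrinciple} applied to the pair $(u,V_{K,Q}^\ast)$ gives
\[
(dd^cV_{K,Q}^\ast)^n \ \geq\ \chi_{\{u=V_{K,Q}^\ast\}}\,(dd^cu)^n.
\]
Since $u$ is locally bounded, $(dd^cu)^n$ puts no mass on the pluripolar set $P$; for any open $U$ meeting $K$,
\[
(dd^cV_{K,Q}^\ast)^n(U)\ \geq\ (dd^cu)^n\bigl(U\cap(K\setminus P)\bigr)\ =\ (dd^cu)^n(U\cap K)\ >\ 0,
\]
the final strict inequality because $\supp(dd^cu)^n=K$. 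Combined with the easy inclusion this gives $K=\supp(dd^cV_{K,Q}^\ast)^n$.

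The expected main obstacle is the construction of $u$ in the second paragraph: the Monge--Amp\`ere operator is highly nonlinear, so neither sums nor suprema of the $V_{K_k}^\ast$ have Monge--Amp\`ere measures that decompose as sums over $k$. Ensuring simultaneously membership in the Lelong class, continuity, and the full-support property for $(dd^cu)^n$ will likely require the local regularization step so that Proposition~\ref{Siciak213} supplies continuity, together with a careful Demailly-type lower bound for the Monge--Amp\`ere mass of the total sum in terms of the measures of the individual summands.
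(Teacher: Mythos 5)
Your final paragraph --- setting $Q:=u|_K$, invoking Proposition~\ref{proposition12}, and transferring mass from $(dd^cu)^n$ to $(dd^cV_{K,Q}^\ast)^n$ with Theorem~\ref{DemaillyMaximumPrinciple} and Lemma~\ref{SuppprtOfExtremalMeasureSubsetSw} --- is exactly the paper's mechanism. Where you diverge is the choice of the auxiliary function: the paper takes a \emph{single} explicit $u=V_{K_r,Q_r}$ for one large closed ball $K_r\supset K$ with $Q_r=\tfrac{1}{\sqrt{2r}}|z|^2$ (continuous by Proposition~\ref{Siciak213}), quoting Example 3.7 of \cite{AlanMuhammed} for the full-support property, and then argues that this measure charges every non-pluripolar subset of $K$. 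Your route, via a countable family of local extremal functions $V_{K\cap\overline{B(z_k,1/k)}}^\ast$, is genuinely different and in one respect better aimed (your candidate measures are supported \emph{on} $K$, whereas $(dd^cV_{K_r,Q_r})^n$ is absolutely continuous with respect to Lebesgue measure, which makes the paper's final inequality delicate when $K$ is Lebesgue-null). But as written your construction has two real gaps.

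First, continuity. You need each $V_{K_k}^\ast$ continuous, and you propose to get this by passing to a non-pluripolar \emph{regular} compact subset of $K\cap\overline{B(z_k,1/k)}$. Nothing in the paper produces such subsets: Proposition~\ref{Siciak213} \emph{assumes} local regularity, it does not manufacture regular subsets of an arbitrary non-pluripolar compact set. In one variable this is Ancona's theorem; its $\cn$ analogue is a substantial missing ingredient, not a routine regularization. The gap is not cosmetic, because an admissible weight must be \emph{lower} semicontinuous while the restriction of a \psh function to $K$ is only upper semicontinuous; if $u$ fails to be continuous on $K$, then $Q=u|_K$ need not be admissible at all. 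Second, the full-support claim for $(dd^cu)^n$, which you yourself flag as the main obstacle, is asserted rather than proved. The missing tool is superadditivity: for locally bounded \psh $v,w$ one has $(dd^c(v+w))^n\geq(dd^cv)^n+(dd^cw)^n$ since all mixed terms are positive, and this (with a limiting argument for the infinite, locally uniformly convergent sum, which requires $\sum_k c_kC_k<\infty$) gives $(dd^cu)^n\geq c_k^n(dd^cv_k)^n$ for each $k$. That yields $\supp(dd^cu)^n\supset K$, because each $\supp(dd^cv_k)^n$ is a nonempty subset of $K_k$ and these sets accumulate at every point of $K$ (note $z_k$ itself need not lie in $\supp(dd^cv_k)^n$). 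However, your concluding identity $(dd^cu)^n(U\cap K)=(dd^cu)^n(U)$ presumes $\supp(dd^cu)^n=K$, i.e.\ that $(dd^cu)^n$ is concentrated on $K$, and this is false in general: the mixed currents $dd^cv_{j_1}\wedge\cdots\wedge dd^cv_{j_n}$ with distinct indices put mass outside $\bigcup_kK_k$. This last defect is repairable --- bound $(dd^cV_{K,Q}^\ast)^n(U)$ below by $c_k^n(dd^cv_k)^n(U\cap K)>0$ directly rather than through $(dd^cu)^n(U)$ --- but the continuity issue is a genuine hole in the proposal.
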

\begin{proof} Since $K$ is compact, $K\subset K_r$ for some $r>0$, where
 $K_r:=\overline{B(z,r)}$. Let   $Q_r$ be the weight on $K_r$ defined by $Q_r:=\frac{1}{\sqrt{2r}}|z|^2$. By Example 3.7 of \cite{AlanMuhammed}, we have  $\supp (dd^c V_{K_r, Q_r})^n = K_r$.

We define  $Q|_K:=u=V_{K_r, Q_r}$. By proposition \ref{proposition12} we have $V_{K,Q}^\ast =u$ quasi everywhere on $K$.
By Demailly's inequality we have
  \begin{eqnarray}
    \nonumber  (dd^c V_{K, Q}^\ast)^n&=& (dd^c \max\{ V_{K, Q}^\ast, V_{K_r, Q_r} )^n  \\
    \nonumber                               &\geq &\chi_{\{ V_{K_r, Q_r}\geq V_{K,Q}^\ast\}} (dd^c V_{K_r, Q_r})^n +\chi_{ \{V_{K, Q}^\ast > V_{K_r, Q_r}\}} (dd^c V_{K, Q}^\ast)^n.
  \end{eqnarray}
Due to the facts that the set $\{V_{K, Q}^\ast > V_{K_r, Q_r}\}\cap K$ is pluripolar,  and that $ V_{K, Q}^\ast$ is  locally bounded, we have $(dd^c V_{K, Q}^\ast)^n $ vanishes on $\{V_{K, Q}^\ast > V_{K_r, Q_r}\}\cap K$. Therefore, we have
$(dd^c V_{K, Q}^\ast)^n \geq (dd^c V_{K_r, Q_r})^n$ quasi everywhere on $K$. Namely,  for any non-pluripolar (Borel) subset $E$   of $K$, we have
\begin{equation}\label{EQ12}(dd^c V_{K, Q}^\ast)^n (E)\geq (dd^c V_{K_r, Q_r})^n(E)>0.\end{equation}
For any $z\in K$ for every $r>0, $ we have $(dd^c V_{K, Q}^\ast)^n (K\cap B(z,r) )>0$. Therefore $z\in \supp(dd^c V_{K, Q}^\ast)^n$.
\end{proof}

\begin{corollary}\label{corallary101}
Let $K$ be a locally regular compact subset of \cn. Then there exists a continuous weight $Q$ on $K$ such that $K =\supp(dd^c V_{K,Q}^\ast)^n$ and $Q=V_{K,Q}$ on $K$.
\end{corollary}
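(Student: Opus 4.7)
The plan is to obtain the corollary as a direct consequence of the main theorem combined with Proposition \ref{proposition12}. The key observation is that the weight built in the proof of the main theorem is already of the form $Q = u|_K$ for a continuous plurisubharmonic $u \in L$, which is precisely the hypothesis needed to invoke Proposition \ref{proposition12}.

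First, I would verify that local regularity of $K$ implies the hypothesis of the main theorem, namely that $K$ is non-pluripolar at each of its points. Indeed, if some $K \cap \overline{B(z,r)}$ with $z \in K$ and $r > 0$ were pluripolar, then its unweighted Siciak--Zahariuta extremal function would be identically $+\infty$ and thus fail to be continuous, contradicting local regularity. Hence the hypothesis of the main theorem is satisfied.

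Next, I would apply the main theorem to obtain a continuous weight $Q$ on $K$ with $K = \supp(dd^c V_{K,Q}^\ast)^n$. Tracing through its construction, this weight is defined by $Q := u|_K$, where $u = V_{K_r, Q_r}$ is continuous on $\cn$ (by Example 3.7 of \cite{AlanMuhammed}), plurisubharmonic, and in the Lelong class $L$. Proposition \ref{proposition12} then immediately yields $V_{K,Q} = u = Q$ on $K$, which is the second required conclusion. If desired, one may further invoke Proposition \ref{Siciak213} to conclude that $V_{K,Q}$ is continuous on all of $\cn$, hence $V_{K,Q} = V_{K,Q}^\ast$ globally, so the equality $Q = V_{K,Q}$ on $K$ is unambiguous.

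The main obstacle here is really only pedagogical: one must notice that the weight produced by the proof of the main theorem already has exactly the form required by Proposition \ref{proposition12}. Once this is observed, the corollary follows with no additional computation.
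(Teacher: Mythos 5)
Your proof is correct, but it reaches the equality $Q=V_{K,Q}$ on $K$ by a genuinely different route than the paper. The paper first invokes Proposition \ref{Siciak213} (using local regularity of $K$ and continuity of $Q$) to conclude that $V_{K,Q}$ is continuous, hence $V_{K,Q}^\ast=V_{K,Q}\leq Q$ on $K$, and then combines this with Lemma \ref{SuppprtOfExtremalMeasureSubsetSw} and the just-proved equality $K=\supp(dd^c V_{K,Q}^\ast)^n$ to get the reverse inequality $V_{K,Q}^\ast\geq Q$ on $K$. You instead observe that the weight constructed in the main theorem is $Q=u|_K$ with $u=V_{K_r,Q_r}$ continuous, plurisubharmonic and in $L$, so Proposition \ref{proposition12} gives $V_{K,Q}=u=Q$ on $K$ outright. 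Your route is shorter and shows that this particular conclusion needs only non-pluripolarity of $K$, which you correctly deduce from local regularity --- modulo the small imprecision that for a nonempty pluripolar compact piece $E$ it is $V_E^\ast$, not $V_E$, that is identically $+\infty$; $V_E$ equals $+\infty$ only quasi everywhere and is $\leq 0$ on $E$ itself, which still rules out continuity. What the paper's route buys is the stronger conclusion $Q=V_{K,Q}^\ast$ on all of $K$ together with global continuity of $V_{K,Q}$; you recover this only in your optional final appeal to Proposition \ref{Siciak213}, which is where local regularity genuinely enters. Both arguments are sound.
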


\begin{proof}
We define $K_r$ and $Q_r$ as in the proof of above theorem. By above theorem we have
$K =\supp(dd^c V_{K,Q}^\ast)^n$.
By Proposition \ref{Siciak213}, we have $V_{K,Q}$ is continuous, thus $V_{K,Q}\leq Q$ on $K$. By combining these with Lemma \ref{SuppprtOfExtremalMeasureSubsetSw}, we have $Q=V_{K,Q}$ on $K$.
\end{proof}

As a corollary, we obtain the following unexpected result.
\begin{corollary}
There exists  a continuous plurisubharmonic function $u\in L^+$, such that $\supp(dd^c u)^n =\p \Delta^n$, where $\Delta^n$ is the polydisc in \cn.
\end{corollary}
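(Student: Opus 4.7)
The plan is to apply Corollary \ref{corallary101} directly to the compact set $K := \p \Delta^n$. If the corollary applies, it produces a continuous weight $Q$ on $K$ with $\supp(dd^c V_{K,Q}^\ast)^n = K$ and $V_{K,Q}=Q$ on $K$; by Proposition \ref{Siciak213}, $V_{K,Q}$ is then continuous on \cn, and its upper semicontinuous regularization lies in $L^+$. Hence $u:=V_{K,Q}^\ast = V_{K,Q}$ is the desired continuous plurisubharmonic function in $L^+$ with $\supp(dd^c u)^n = \p\Delta^n$, and the entire task reduces to verifying the single hypothesis of Corollary \ref{corallary101}: that $\p\Delta^n$ is locally regular.

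To verify local regularity I would decompose $K$ into its $n$ faces $F_j := \{z \in \overline{\Delta^n} : |z_j|=1\}$, so that $K = F_1 \cup \cdots \cup F_n$. After a permutation of coordinates, each $F_j$ is the Cartesian product of the unit circle in $\mathbb{C}$ with the closed unit polydisc in $\mathbb{C}^{n-1}$; both factors are locally regular by classical unweighted pluripotential theory, and local regularity is preserved under Cartesian products (Siciak's product theorem), so each $F_j$ is locally regular. Local regularity then passes through finite unions: at any $z_0 \in K$ some face $F_{j_0}$ contains $z_0$, and since $F_{j_0}\cap \overline{B(z_0,r)}\subseteq K\cap \overline{B(z_0,r)}$ we have $V_{K\cap \overline{B(z_0,r)}} \leq V_{F_{j_0}\cap \overline{B(z_0,r)}}$, which combined with $V \geq 0$ forces continuity of $V_{K\cap \overline{B(z_0,r)}}$ at $z_0$ for every $r>0$.

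The main difficulty is really just the bookkeeping for local regularity, specifically knowing that a closed polydisc is locally regular (by iterated application of the product theorem to closed discs) and that this property is stable under finite unions. All of this is classical (see, e.g., Klimek's monograph). Once accepted, the statement follows immediately from Corollary \ref{corallary101}, and it captures the striking phenomenon that although the unweighted Monge-Amp\`ere measure of $\overline{\Delta^n}$ concentrates on the distinguished boundary $\{|z_1| = \cdots = |z_n| = 1\}$, a suitable continuous weight forces the weighted Monge-Amp\`ere measure of $u$ to spread over the full topological boundary $\p\Delta^n$.
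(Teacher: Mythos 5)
Your proposal is correct and is exactly the argument the paper intends: the corollary is stated without proof immediately after Corollary \ref{corallary101}, and the intended deduction is precisely your application of that corollary to $K=\partial\Delta^n$, with continuity of $u=V_{K,Q}^\ast=V_{K,Q}$ coming from Proposition \ref{Siciak213} and membership in $L^+$ from non-pluripolarity of $\partial\Delta^n$. Your verification of local regularity (faces, Siciak's product theorem, finite unions) supplies a standard detail the paper omits; the only imprecision is in the last step, where to get regularity of each truncated set $K\cap\overline{B(z_0,r)}$ in the paper's sense you should run the comparison $V_{K\cap\overline{B(z_0,r)}}\leq V_{F_j\cap\overline{B(z_0,r)}}$ at every point of $K\cap\overline{B(z_0,r)}$ (so that $V^\ast_{K\cap\overline{B(z_0,r)}}$ vanishes on all of that set), not only at the center $z_0$.
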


\begin{oprob}A compact set $K\subset\cn$ is locally regular if and only  it is the support of the  Monge-{A}mp\`ere measure of a continuous function in $L^+$.
\end{oprob}

\begin{remark}
Note that the above open problem might be a step to understand the measures which are  Monge-{A}mp\`ere measures of  continuous plurisubharmonic function.
\end{remark}

\bibliographystyle{amsalpha}

\providecommand{\bysame}{\leavevmode\hbox to3em{\hrulefill}\thinspace}
\providecommand{\MR}{\relax\ifhmode\unskip\space\fi MR }
\providecommand{\MRhref}[2]{%
  \href{http://www.ams.org/mathscinet-getitem?mr=#1}{#2}
}
\providecommand{\href}[2]{#2}

\end{document}